\documentclass[12pt,a4paper]{article}

\usepackage{amssymb, amsmath, amsthm, amsfonts}

\usepackage[cm]{fullpage}
\usepackage[english]{babel}
\usepackage[T1]{fontenc}
\usepackage[pdftex]{graphicx}
\usepackage{booktabs}
\usepackage{xcolor}
\usepackage{hyperref}
\hypersetup{
    colorlinks = false
}

\usepackage{mathtools}
\usepackage{graphicx}
\usepackage{epstopdf}
\usepackage{dsfont}
\usepackage{mathrsfs}
\usepackage{physics}
\usepackage{bm}

\newtheorem{thm}{Theorem}
\newtheorem{lem}{Lemma}

\newtheorem{cor}{Corollary} 


\newcommand{\R}{\mathbb{R}}

\newcommand{\ph}{\varphi}

\usepackage{csquotes}
\usepackage[
backend=biber,
style=phys,
sorting=none
]{biblatex}
\addbibresource{bibliography.bib}

\title{Fully discrete Galerkin scheme for a semilinear subdiffusion equation with nonsmooth data and time-dependent coefficient}
\author{\L ukasz P\l ociniczak\thanks{Faculty of Pure and Applied Mathematics, Wroc{\l}aw University of Science and Technology, Wyb. Wyspia{\'n}skiego 27, 50-370 Wroc{\l}aw, Poland, email: lukasz.plociniczak@pwr.edu.pl}
\and 
Kacper Ta\'zbierski$^{*,}$\thanks{Email: kacper.tazbierski@pwr.edu.pl}}

\begin{document}

\maketitle

\begin{abstract}
We couple the L1 discretization of the Caputo fractional derivative in time with the Galerkin scheme to devise a linear numerical method for the semilinear subdiffusion equation. Two important points that we make are: nonsmooth initial data and time-dependent diffusion coefficient. We prove the stability and convergence of the method under weak assumptions concerning regularity of the diffusivity. We find optimal pointwise in space and global in time errors, which are verified with several numerical experiments.
\end{abstract}

\textbf{Keywords:} L1 method, time-dependent coefficient, subdiffusion, nonsmooth data, Galerkin method\\

\section{Introduction}
We consider the following time-fractional semilinear diffusion problem
\begin{equation}\label{eq:maineq}
\begin{cases}
     \partial_t^\alpha u=\nabla \cdot \left( D(x,t)\nabla u\right)+f(x,t,u), & x\in \text{int }\Omega\subset\R^d, \; t\in(0,T] \\
     u(x,t) = 0, & x\in\partial\Omega,t\in(0,T],\\
     u(x,0) = \ph(x), &  x\in \Omega,
\end{cases}
\end{equation}
with $\alpha\in(0,1)$ and $\ph(x)\in H^p_0$, $p\in(0,2]$ where $H^p_0$ is the (fractional) Sobolev space of functions with vanishing trace. $\partial^\alpha_t$ is the Caputo derivative defined for $\alpha\in(0,1)$ with the following formula
\begin{equation}
    \partial_t^\alpha y(t)=\frac{1}{\Gamma(1-\alpha)}\int\limits_0^t(t-s)^{-\alpha}y_\tau(x,\tau)d\tau.
\end{equation}
We will also write $\partial^m_t$ with $m\in\mathbb{N}$ to denote the classical partial derivative with respect to time. 

The main equation \eqref{eq:maineq} arises as a model of various diffusion phenomena that exhibit memory effects. For example, a randomly walking particle for which the mean squared displacement increases as a power function of time is said to undergo an anomalous diffusion \cite{metzler2000random}. For the subdiffusive regime, that is, when the exponent of this power-type relation is less than one, the Caputo derivative naturally emerges as a time evolution operator. In the deterministic counterpart, one can also show that the time-fractional operator arises when studying the waiting-time phenomenon in fluid dynamics. In the hydrological setting, this produces a nonlocal conservation law, which in turn leads to \eqref{eq:maineq} (see \cite{plociniczak2015analytical}). There are many other occurrences of anomalous diffusion, which are usually modeled with fractional operators (both in space and time). For example, experiments indicate anomalous evolution in protozoa migration \cite{alves2016transient}, single particle tracking in biophysics \cite{Sun17, wong2004anomalous}, plasma physics \cite{Del05}, astrophysics \cite{lawrence1993anomalous}, and viscoelasticity \cite{bagley1983theoretical} to list only a few items. 

The main purpose of this paper is to devise a fully discrete numerical method for solving \eqref{eq:maineq} and to prove its stability and convergence under the assumptions of nonsmooth initial data. This is a continuation of our previous results \cite{Plociniczak2022} where we have investigated the semi-discrete scheme. Here, we couple it with the L1 discretization of the Caputo derivative. This approximation for the Caputo derivative is based on a piecewise linear basis and was introduced in \cite{oldham1974fractional} to compute the Riemann-Liouville derivative. Further analyses of this method can be found, for example, in \cite{Plociniczak2023} where the optimal error constant for smooth functions was found. For such a regularity, the L1 method achieves a global in time order $2-\alpha$ that can deteriorate to mere $\alpha$ for H\"older functions. The latter result was proved in \cite{Stynes2016,kopteva2019error} assuming the existence of second derivatives (but not necessarily bounded; we will discuss this matter below). The L1 method has been applied to many various problems and equations, and we would like to mention only the most relevant ones for our work. In particular, Galerkin discretization in space has been coupled with the L1 method to solve the subdiffusion equarion with constant diffusivity and nonsmooth data in \cite{jin2019subdiffusion} where the authors used operator theory to find optimal errors of the method. This result has been generalized for a space-dependent diffusivity, for example, in \cite{kopteva2019error} by a completely different technique that unifies many methods into one framework. A semilinear subdiffusion equation has been thoroughly analyzed in \cite{jin2018numerical}. A complete review of the L1 method can be found in \cite{stynes2022survey}. As for both space- and time-dependent coefficients, to the best of our knowledge, one can find only a limited number of relevant works. For example, in \cite{jin2019subdiffusion} the authors used a variant of the perturbation method to find the optimal error bound for the discrete in space Galerkin finite element method (this result has been further improved in \cite{jin2020subdiffusion}). A completely different approach was taken in \cite{mustapha2018fem} where an energy-type approach was developed to tackle the time-dependent diffusivity case which causes severe difficulties. This approach was later refined in a paper by the first author, \cite{Plociniczak2022} in which all assumptions were substantially weakened and the result was generalized to the semilinear case, building a passage to the quasilinear problem (which was analyzed in \cite{Plociniczak2023} in the case of regular initial data). However, in all of these papers, the devised numerical methods were semi-discrete. This paper also includes time discretization and gives an error analysis. Obtained results agree with optimal error bounds for constant coefficient linear subdiffusion, namely: globally in time and locally in space for nonsmooth initial data. 

In what follows we will assume the following regularity of the source and diffusivity
\begin{equation}\label{eq:contraints}
\begin{gathered}
    |f(x,t,u)|\leq F,\quad |f(x,t,u)-f(x,t,v)|\leq L|u-v|,\\
    0<D_-\leq D(x,t)\leq D_+,\quad |\partial^1_t D(x,t)|\leq \kappa(t)\in L^1((0,T)), \quad \nabla D(\cdot,t)\in L^2(\Omega)
\end{gathered}
\end{equation}
where $D_\pm$, $F$ and $L$ are positive constants. These assumptions are the same as in \cite{Plociniczak2022} and can be thought of as natural for the non-degenerate subdiffusion. Notice that for the time-dependence of the diffusivity we assume only absolute continuity, which is much weaker than the assumptions made in previous papers \cite{mustapha2018fem,jin2019subdiffusion} (see the discussion in \cite{Plociniczak2022}). The requirement of boundedness of the source $f$ can be relaxed by standard arguments presented, for example, in \cite{thomee2007}, Chapter 13. For simplicity, we will use the stringent regularity assumption above to make the presentation clear. 

We will use the notation $D(t) = D(\cdot, t)$ and $f(t,u) = f(\cdot, t, u)$ in order to clearly indicate the time-depenedence of the coefficient and the source. By standard integration by parts argument, the weak form of \eqref{eq:maineq} is
\begin{equation}\label{eq:weakeq}
\left(\partial^\alpha_tu,\chi\right)+a\left(D(t);u,\chi\right)=\left( f(t,u),\chi \right), \quad \chi\in H_0^1,
\end{equation}
where $(\cdot,\cdot)$ is the $L^2$ inner product, and 
\begin{equation}
    a(w;u,v)=\int\limits_\Omega w\; \nabla u\cdot \nabla v \,dx,
\end{equation}
is a bilinear form in $u$ and $v$. Now, let $u_h$ be the spatially discretized solution fulfilling
\begin{equation}
\left(\partial^\alpha_tu_h,v_h\right)+a\left(D(t);u_h,v_h\right)=\left( f(t,u),v_h \right), \quad v_h\in V_h,
\end{equation}
where $u_h(x,0)=P_h\ph(x)$ is the orthogonally projected initial condition into the space $V_h\subset H_0^1$ of piecewise linear continous functions over $\Omega_h\subset\overline{\Omega}$, that is, the quasiuniform triangulation of $\Omega$ with maximal diameter $h$. Of course, $\dim V_h < \infty$. Next, introduce the time grid $t_n = n\Delta t$ with $\Delta t=T/N$ and use the $L1$ scheme for the Caputo derivative denoted $\delta_t^\alpha$ defined as \cite{Plociniczak2023} 
\begin{equation}\label{eq:caputoscheme}
    \delta_t^\alpha y(t_n)=\frac{(\Delta t)^{-\alpha}}{\Gamma(2-\alpha)}\sum_{i=0}^{n-1}b_{n-i}(1-\alpha)(y(t_{i+1})-y(t_i)),
\end{equation}
with coefficients
\begin{equation}
    b_j(\beta)=j^\beta-(j-1)^\beta.
\end{equation}
In this way we can obtain a fully discrete scheme for finding the numerical approximation $u^n_h$ of the solution to our main equation \eqref{eq:maineq}
\begin{equation}\label{eq:fulldiscrete}
\left(\delta^\alpha_tu_h^n,v_h\right)+a\left(D(t_n);u_h^n,v_h\right)=\left( f(t_n, u^n_h),v_h \right).
\end{equation}
This equation gives rise to nonlinearities in the algebraic systems it produces due to the nonlinear source function, thus we will finish the scheme by extrapolating the values in that term. Therefore, we substitute $u_h^n$ in the RHS of \ref{eq:fulldiscrete} by $\hat{u}_h^n=2u_h^{n-1}-u_h^{n-2}$. The final scheme now reads
\begin{equation}\label{eq:finalscheme}
\left(\delta^\alpha_tu_h^n,v_h\right)+a\left(D(t_n);u_h^n,v_h\right)=\left( f(t_n, \hat{u}^n_h),v_h \right).
\end{equation}
As the scheme uses two past iterations, the first step must be done by solving the nonlinear variant given by \eqref{eq:fulldiscrete}. In the next section we prove the stability and convergence of the above numerical scheme along with theoretical estimates on orders of convergence. In Section 3 we present some numerical experiments that verify and illustrate our findings.

In what follows we adopt a notational convention to denote by $C>0$ a general constant dependent on $\alpha$, $T$, $\Omega$, $f$, and the exact solution $u$ or its derivatives. The generic constant $C$ does not depend on the numerical grid.
\section{Stability and convergence}
We will be interested in the $L^2$ norm of the error $u-u^n_h$, which can be decomposed as
\begin{equation}\label{eqn:Splitting}
    \|u-u_h+u_h-u^n_h\|\leq \|u-u_h\|+\|u_h-u^n_h\|.
\end{equation}
Assuming \eqref{eq:contraints} we can use Theorem 2 from \cite{Plociniczak2022} which gives us a bound on the error in space
\begin{equation}
    \|u-u_h\|\leq \|u-u_h\|+h\|\nabla(u-u_h)\|\leq Ch^2t^{-\frac{\alpha(2-p)}{2}}\|\ph\|_p,
\end{equation}
where $p\in[0,2]$. Thus, we can only focus on the error in time which we denote by $e_h^n=u_h-u_h^n$. First, we will prove the stability of the fully discrete scheme. 

\begin{thm}[Stability]
Let $u$ be the solution to \eqref{eq:finalscheme} and assume \eqref{eq:contraints}. Then,
\begin{equation}
    \|u^n_h\|\leq C\left(\|u_h^0\|+F\right)
\end{equation}
where the positive constant $C=C(\alpha,T,\Omega)$.
\end{thm}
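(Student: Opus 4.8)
The plan is to test the scheme \eqref{eq:finalscheme} with $v_h = u_h^n$ and combine the coercivity of the diffusion form with a positivity property of the discrete operator $\delta_t^\alpha$. Setting $v_h = u_h^n$ gives
\[
(\delta_t^\alpha u_h^n, u_h^n) + a(D(t_n); u_h^n, u_h^n) = (f(t_n, \hat{u}_h^n), u_h^n).
\]
Since $D(t_n) \geq D_- > 0$ by \eqref{eq:contraints}, the form is coercive, so $a(D(t_n); u_h^n, u_h^n) \geq D_- \|\nabla u_h^n\|^2 \geq 0$ and may simply be discarded. For the right-hand side I would use only the uniform bound $|f| \leq F$ together with Cauchy--Schwarz to obtain $(f(t_n, \hat{u}_h^n), u_h^n) \leq F|\Omega|^{1/2}\|u_h^n\|$; note that neither the extrapolation $\hat{u}_h^n$ nor the Lipschitz constant $L$ enters the stability estimate, which is precisely why boundedness of $f$ alone suffices.

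The heart of the proof is the algebraic structure of the L1 weights. Writing $\mu = (\Delta t)^{-\alpha}/\Gamma(2-\alpha)$ and $a_j = j^{1-\alpha} - (j-1)^{1-\alpha}$, a summation by parts recasts \eqref{eq:caputoscheme} as
\[
\delta_t^\alpha y^n = \mu\left( y^n - \sum_{j=1}^{n-1} d_j\, y^j - a_n\, y^0 \right), \qquad d_j = a_{n-j} - a_{n-j+1}.
\]
For $\alpha \in (0,1)$ the sequence $a_j$ is positive and strictly decreasing, hence every $d_j > 0$, and a telescoping computation gives $\sum_{j=1}^{n-1} d_j + a_n = a_1 = 1$, so the explicit weights sum exactly to the coefficient of $y^n$. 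Applying this representation inside the inner product and using $(u_h^j, u_h^n) \leq \|u_h^j\|\,\|u_h^n\|$ on every cross term yields, with $E^n := \|u_h^n\|$, the scalar positivity inequality $(\delta_t^\alpha u_h^n, u_h^n) \geq E^n\, \delta_t^\alpha E^n$.

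Combining the two bounds gives $E^n\, \delta_t^\alpha E^n \leq F|\Omega|^{1/2}\, E^n$, and dividing by $E^n$ at any step where $E^n > 0$ (the estimate being trivial otherwise) reduces everything to the scalar discrete fractional inequality
\[
\delta_t^\alpha E^n \leq F|\Omega|^{1/2}.
\]
To close the argument I would invoke a discrete comparison principle: the solution of $\delta_t^\alpha W^n = F|\Omega|^{1/2}$ with $W^0 = E^0$ dominates $E^n$, and since this is the L1 discretization of $\partial_t^\alpha W = F|\Omega|^{1/2}$, whose exact solution is $E^0 + F|\Omega|^{1/2}\, t^\alpha/\Gamma(1+\alpha)$, one expects $E^n \leq W^n \leq E^0 + C F\, t_n^\alpha \leq C(E^0 + F)$ uniformly on $[0,T]$ with $C = C(\alpha, T, \Omega)$.

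The hard part will be the two facts that encode the specific structure of the L1 scheme: the positivity and monotonicity of the coefficients $a_j$ that force $d_j > 0$ and give the scalar positivity inequality, and the discrete comparison principle with a grid-independent constant. These can be packaged together as a discrete fractional Gr\"onwall lemma; the remaining ingredients (coercivity, Cauchy--Schwarz, and discarding the diffusion term) are routine.
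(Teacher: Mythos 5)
Your proof is correct, and its skeleton matches the paper's: test \eqref{eq:finalscheme} with $v_h=u_h^n$, drop the coercive diffusion term, and bound the source using only $|f|\leq F$ (your constant $F|\Omega|^{1/2}$ is in fact the correct one; the paper writes $F|\Omega|$, a harmless slip). The difference lies in the two key lemmas. The paper cites the squared-norm positivity property $(\delta_t^\alpha y^n,y^n)\geq\tfrac12\delta_t^\alpha\|y^n\|^2$ (Proposition 2 of \cite{Plociniczak2023}); after Cauchy's inequality this leaves $\|u_h^n\|^2$ on the right-hand side, so the full discrete fractional Gr\"onwall inequality of \cite{Liao2018} is needed. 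You instead derive, from the positivity and monotonicity of the L1 weights, the norm-level property $(\delta_t^\alpha u_h^n,u_h^n)\geq\|u_h^n\|\,\delta_t^\alpha\|u_h^n\|$ and divide out $\|u_h^n\|$, leaving $\delta_t^\alpha\|u_h^n\|\leq F|\Omega|^{1/2}$ with no norm on the right; then only a comparison principle is required (the zero-linear-term case of the same Gr\"onwall lemma), and the resulting bound $\|u_h^0\|+CF t_n^\alpha$ avoids the Mittag-Leffler growth factor that the general Gr\"onwall estimate carries. Two loose ends you should tighten. First, the parenthetical ``the estimate being trivial otherwise'' is not quite the right justification: the comparison argument needs the scalar inequality $\delta_t^\alpha E^n\leq F|\Omega|^{1/2}$ at \emph{every} step, including steps where $E^n=0$; it does hold there, but because $E^n=0$ together with the nonnegativity of the weights $d_j$, $a_n$ forces $\delta_t^\alpha E^n\leq 0$, not because the conclusion at that step is trivial. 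Second, the bound $W^n\leq E^0+CFt_n^\alpha$ for the discrete solution of $\delta_t^\alpha W^n=F|\Omega|^{1/2}$, which you only assert (``one expects''), is exactly what \cite{Liao2018}, Lemma 2.2 provides when the linear term vanishes, so you can close the argument by citing the same lemma the paper uses rather than proving a new one.
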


\begin{proof}
Put $v_h=u_h^n$ in \eqref{eq:finalscheme} to obtain
\begin{equation}
\left(\delta^\alpha_tu_h^n,u_h^n\right)=-a\left(D(t_n);u_h^n,u_h^n\right)+\left(f(t_n, \hat{u}^n_h), u_h^n \right).
\end{equation}
Now utilizing the $(\delta_t^\alpha y^n,y^n)\geq \frac{1}{2}\delta_t^\alpha\|y^n\|^2$ inequality for $L^2$ functions (for example see Proposition 2 in \cite{Plociniczak2023}), noticing that $a$ is a positive-definite form and using the Cauchy-Schwarz inequality we get
\begin{equation}
    \frac{1}{2}\delta_t^\alpha\|u^n_h\|^2\leq \|f(t_n, \hat{u}^n_h)\|\|u_h^n\|.
\end{equation}
Utilizing the Cauchy inequality $ab\leq a^2+b^2$ and the assumption \eqref{eq:contraints} on $f$ resulting in a bound $\|f(t_n, u^n_h)\|\leq F|\Omega|$ we now get
\begin{equation}
    \delta_t^\alpha\|u^n_h\|^2\leq 2CF^2+2\|u_h^n\|^2.
\end{equation}
Now we can use the discrete fractional Gr\"{o}nwall inequality (see \cite{Liao2018}, Lemma 2.2), obtaining
\begin{equation}
    \|u^n_h\|^2\leq C\left(\|u_h^0\|^2+F^2\right).
\end{equation}
Using the inequality $a^2+b^2\leq(a+b)^2$ for non-negative real numbers we get the final result
\begin{equation}
    \|u^n_h\|\leq C\left(\|u_h^0\|+F\right),
\end{equation}
and the proof is complete. 
\end{proof}
Convergence can be proved with a Gr\"onwall inequality along with an estimate for the truncation error for the L1 discretisation of the Caputo derivative for solution with the following regularity estimate
\begin{equation}\label{eq:Regularity}
    \|\partial^m_t u(t)\|\leq C\left(1+t^{\alpha-m}\right), \quad m\in\{ 0,1,2 \}.
\end{equation}
This behaviour for small times is exactly what solutions to the linear subdiffusion equation exhibit for a general source term \cite{sakamoto2011initial}. A corresponding result has also been proved for semilinear equations with constant diffusivity in \cite{al2019numerical}. Existence, uniqueness, and H\"older regularity of solutions to a very general nonlinear subdiffusion problem has been investigated in \cite{topp2017existence} in the context of viscosity solutions. 

Before we proceed to the proof of convergence, we also need a bound for extrapolation error under our regularity assumptions.
\begin{lem}[Extrapolation error]\label{lem:Extrapolation}
    Let $y=y(t)$ be a function of time that satisfies \eqref{eq:Regularity}. Then the error of extrapolation $\hat{y}(t_n)=2y(t_{n-1})-y(t_{n-2})$ for $n\geq 2$ is bounded as
    \begin{equation}
        |\hat{y}(t_n)-y(t_n)|\leq
            C\left(1+t_n^{\alpha-1}\right)\Delta t,
    \end{equation}
    with a positive constant $C=C(y,\alpha)$.
\end{lem}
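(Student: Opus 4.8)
The plan is to recognise $\hat{y}(t_n) = 2y(t_{n-1}) - y(t_{n-2})$ as the value at $t_n$ of the linear polynomial that interpolates $y$ at the two preceding nodes $t_{n-2}$ and $t_{n-1}$; since the grid is uniform we have $t_n = 2t_{n-1} - t_{n-2}$, so this identification is exact and $\hat{y}(t_n)-y(t_n)$ is precisely the extrapolation remainder. I would make that remainder explicit through Taylor's theorem with integral remainder expanded about the midpoint $t_{n-1}$. Writing both $y(t_n)$ and $y(t_{n-2})$ this way and forming $2y(t_{n-1}) - y(t_{n-2}) - y(t_n)$, the function value and first-derivative terms cancel, leaving
\begin{equation}
    \hat{y}(t_n) - y(t_n) = -\int_{t_{n-2}}^{t_{n-1}}(s - t_{n-2})\,y''(s)\,ds - \int_{t_{n-1}}^{t_n}(t_n - s)\,y''(s)\,ds.
\end{equation}
Each remainder carries a weight that vanishes at the endpoint farther from $t_{n-1}$, a feature that turns out to be decisive near the origin.

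For the generic case $n \geq 3$, all integration is confined to $[t_{n-2}, t_n]$, which stays away from the temporal singularity at $t = 0$. There I would invoke the regularity bound \eqref{eq:Regularity} with $m = 2$, i.e.\ $|y''(s)| \leq C(1 + s^{\alpha - 2})$, and exploit the monotonicity of $s^{\alpha - 2}$ to replace it by its largest value $t_{n-2}^{\alpha - 2}$ on the interval. Bounding the two weights by $\Delta t$ then gives $|\hat{y}(t_n) - y(t_n)| \leq C(1 + t_{n-2}^{\alpha - 2})(\Delta t)^2$. The remaining step is arithmetic: since $n - 2 \geq n/3$ for $n \geq 3$ we have $t_{n-2} \geq t_n/3$, hence $t_{n-2}^{\alpha - 2} \leq 3^{2-\alpha} t_n^{\alpha-2}$, and because $\Delta t / t_n \leq 1$ I can trade one power of $\Delta t$ for one power of $t_n$ to arrive at $C\,t_n^{\alpha-1}\Delta t$; the constant contribution is absorbed using $(\Delta t)^2 \leq T\Delta t$.

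The main obstacle is the boundary case $n = 2$, where $t_{n-2} = t_0 = 0$ and the interval of integration meets the singularity, so $y \notin C^2$ up to the endpoint and the crude supremum bound fails. The remedy is exactly the vanishing weight $s - t_{n-2} = s$ in the first remainder: the integrand satisfies $s\,|y''(s)| \leq C(s + s^{\alpha-1})$, which is integrable on $(0, \Delta t)$ precisely because $\alpha - 1 > -1$. Evaluating $\int_0^{\Delta t}(s + s^{\alpha-1})\,ds$ and bounding the second, regular remainder over $[\Delta t, 2\Delta t]$ yields $|\hat{y}(t_2) - y(t_2)| \leq C\big((\Delta t)^\alpha + (\Delta t)^2\big)$. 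Finally I would verify that this matches the claimed estimate at $n = 2$: since $t_2 = 2\Delta t$ we have $(1 + t_2^{\alpha-1})\Delta t = \Delta t + 2^{\alpha-1}(\Delta t)^\alpha$, so the two expressions agree up to a constant, completing the argument.
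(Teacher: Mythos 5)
Your proof is correct, but it takes a genuinely different route from the paper. The paper never touches the second derivative: it splits $\hat{y}(t_n)-y(t_n)$ via the triangle inequality into the two consecutive differences $|y(t_n)-y(t_{n-1})|$ and $|y(t_{n-1})-y(t_{n-2})|$, writes each as an integral of $y'$, applies the bound $|y'(t)|\leq C(1+t^{\alpha-1})$ from \eqref{eq:Regularity} with $m=1$, and finishes by observing that $n^\alpha-(n-2)^\alpha\leq Cn^{\alpha-1}$ uniformly in $n\geq 2$. Your argument instead uses the exact Peano-kernel (Taylor remainder) representation of the extrapolation error, which requires the $m=2$ bound $|y''(s)|\leq C(1+s^{\alpha-2})$ and forces the case split between $n\geq 3$ and $n=2$ (where the vanishing weight $s-t_{n-2}=s$ rescues integrability against the singularity). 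The trade-off is clear: the paper's proof is shorter, needs only first-derivative regularity, and is inherently capped at first order; yours is more work but yields the sharper intermediate estimate $O\bigl(t_n^{\alpha-2}(\Delta t)^2\bigr)$ away from $t=0$ — the genuine second-order accuracy of linear extrapolation — which you then deliberately weaken (via $\Delta t/t_n\leq 1$) to the stated uniform form. That extra precision is not needed downstream, since the convergence theorem only uses the crude consequence $\|u_h^n-\hat{u}_h^n\|\leq C(\Delta t)^\alpha$, but your stronger local bound is correct and could be of independent use; all the individual steps (the remainder identity, the inequality $n-2\geq n/3$ for $n\geq 3$, and the $n=2$ endpoint calculation matching $(1+t_2^{\alpha-1})\Delta t$ up to a constant) check out.
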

\begin{proof}
Let us start with the analysis of the error term
\begin{equation}
    |\hat{y}(t_n)-y(t_n)|=|y(t_n)-y(t_{n-1})|+|y(t_{n-1})-y(t_{n-2})|.
\end{equation}
Further, by writing the difference as the integral of a derivative, we have
\begin{equation}
    |\hat{y}(t_n)-y(t_n)|\leq \abs{ \,\int\limits_{t_{n-1}}^{t_n} y^{(1)}(t)dt}+\abs{ \,\int\limits_{t_{n-2}}^{t_{n-1}} y^{(1)}(t)dt}\leq  \int\limits_{t_{n-1}}^{t_n}\abs{ y^{(1)}(t)}dt+ \int\limits_{t_{n-2}}^{t_{n-1}} \abs{y^{(1)}(t)}dt.
\end{equation}
By the assumed regularity of $y$, we can now bound the derivatives, as
\begin{equation}
\begin{aligned}
    |\hat{y}(t_n)-y(t_n)|&\leq\int\limits_{t_{n-1}}^{t_n}C\left(1+t^{\alpha-1}\right)dt+ \int\limits_{t_{n-2}}^{t_{n-1}} C\left(1+t^{\alpha-1}\right)dt\\
    &=2C\Delta t+\frac{C}{\alpha}\left( 
 t_{n}^\alpha-t^\alpha_{n-2}\right)=2C\Delta t+\frac{C}{\alpha}(\Delta t)^\alpha\left( 
 n^\alpha-(n-2)^\alpha\right).
\end{aligned}
\end{equation}
The function $n\mapsto n^\alpha-(n-2)^\alpha$ is decreasing, bounded, and asymptotically equivalent to $2\alpha n^{\alpha-1}$ as $n\rightarrow\infty$. Therefore, there exists a constant $C$ such that
\begin{equation}
|\hat{y}(t_n)-y(t_n)|\leq
            C\left(\Delta t+(\Delta t)^\alpha n^{\alpha-1}\right),
\end{equation}
which concludes the proof. 
\end{proof}
\begin{thm}[Convergence]\label{thm:Convergence}
Let $u$ be the solution to \eqref{eq:finalscheme} and assume \eqref{eq:contraints} together with \eqref{eq:Regularity}. Then, for sufficiently small $\Delta t$ we have
\begin{equation}
    \|e^n_h\|\leq C\left(h^{r}\|\ph\|_r+(\Delta t)^{\alpha}\right),
\end{equation}
where the positive constant $C=C(\alpha, T, \Omega, u)$.
\end{thm}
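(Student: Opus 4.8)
The plan is to bound the purely temporal error $e^n_h=u_h(t_n)-u^n_h$ by comparing the fully discrete scheme \eqref{eq:finalscheme} with the semidiscrete equation evaluated at $t_n$. Subtracting the two and inserting the L1 consistency (truncation) error $R^n:=\partial^\alpha_t u_h(t_n)-\delta^\alpha_t u_h(t_n)$, I would obtain the error identity
\[
\left(\delta^\alpha_t e^n_h,v_h\right)+a\left(D(t_n);e^n_h,v_h\right)=\left(f(t_n,u(t_n))-f(t_n,\hat u^n_h),v_h\right)-\left(R^n,v_h\right).
\]
A pleasant feature is that the time-dependent coefficient creates no new difficulty here: both schemes use the same $D(t_n)$ at the current level, so it collapses into the single positive-definite form $a(D(t_n);e^n_h,\cdot)$, and the genuinely hard analysis of the memory of the time-dependent operator has already been carried out in the semidiscrete estimate of \cite{Plociniczak2022} that we invoke for $\|u-u_h\|$.

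Testing with $v_h=e^n_h$, discarding the nonnegative term $a(D(t_n);e^n_h,e^n_h)\ge0$, and using the discrete energy inequality $(\delta^\alpha_t e^n_h,e^n_h)\ge\tfrac12\delta^\alpha_t\|e^n_h\|^2$ from the stability proof gives
\[
\tfrac12\delta^\alpha_t\|e^n_h\|^2\le\left(\|f(t_n,u(t_n))-f(t_n,\hat u^n_h)\|+\|R^n\|\right)\|e^n_h\|.
\]
The Lipschitz assumption in \eqref{eq:contraints} reduces the source difference to $L\|u(t_n)-\hat u^n_h\|$, and I would split
\[
u(t_n)-\hat u^n_h=\big(u(t_n)-\hat u(t_n)\big)+\big(2(u-u_h)(t_{n-1})-(u-u_h)(t_{n-2})\big)+\big(2e^{n-1}_h-e^{n-2}_h\big),
\]
with $\hat u(t_n)=2u(t_{n-1})-u(t_{n-2})$. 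The first bracket is controlled by the $L^2$-valued version of Lemma \ref{lem:Extrapolation} (its proof transfers verbatim with $\|\cdot\|$ in place of $|\cdot|$), the second by the spatial estimate borrowed from \cite{Plociniczak2022}, which produces the $h^r\|\ph\|_r$ contribution, and the third is the retarded part that will be absorbed by the Gr\"onwall mechanism.

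The main obstacle is the consistency term $\|R^n\|$. Using \eqref{eq:Regularity} with $m=2$, the standard L1 truncation analysis (cf.~\cite{Plociniczak2023,Stynes2016,kopteva2019error}) yields a local bound that is $O((\Delta t)^{2-\alpha})$ for $t_n$ bounded away from zero but degenerates as $t_n\to0$ because of the $t^{\alpha-2}$ singularity of $\partial_t^2 u$, so on a uniform mesh the initial steps dominate. The delicate point is not this pointwise estimate but its propagation. I would keep the $R^i$ inside the discrete fractional convolution and apply the discrete fractional Gr\"onwall inequality (\cite{Liao2018}, Lemma 2.2), whose kernel is designed precisely to absorb such singular consistency errors. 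The key computation is that the weighted history sum of the truncation terms against the Gr\"onwall weights collapses to a solution-error contribution of the type $C\,\Delta t\, t_n^{\alpha-1}$, whose maximum over $n$ is attained at the first level $t_1=\Delta t$ and equals $(\Delta t)^\alpha$; this is exactly the upgrade from the local consistency order to the claimed global order, matching the optimal nonsmooth-data rate.

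Finally I would treat the startup and collect terms. The recursion is valid for $n\ge2$, while $u^1_h$ is computed from the fully nonlinear \eqref{eq:fulldiscrete}; I would estimate $\|e^1_h\|$ directly (it is dominated by $(\Delta t)^\alpha$ since $\alpha<1$) and feed it into the initial datum of the Gr\"onwall recursion. The extrapolation contribution from Lemma \ref{lem:Extrapolation} is $O(\Delta t)$ and the time-singular spatial contribution is integrable against the kernel, so both are subordinate, and assembling the three sources yields the stated bound $\|e^n_h\|\le C\big(h^{r}\|\ph\|_r+(\Delta t)^{\alpha}\big)$.
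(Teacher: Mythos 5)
Your proposal is correct and follows the same skeleton as the paper's proof: an error identity obtained by comparing \eqref{eq:finalscheme} with the semidiscrete equation at $t_n$, testing with $v_h=e^n_h$, the coercivity inequality $(\delta_t^\alpha y^n,y^n)\geq\tfrac12\delta_t^\alpha\|y^n\|^2$, discarding the positive form $a$, the Lipschitz property of $f$, Lemma \ref{lem:Extrapolation}, the L1 truncation bound $\|\delta_t^\alpha u_h-\partial_t^\alpha u_h\|\leq Cn^{-\alpha}$, and the discrete fractional Gr\"onwall inequality of \cite{Liao2018}. The differences are in the bookkeeping, and at each point of divergence your version is the more careful one. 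First, since you Lipschitz-compare against $u(t_n)$ rather than $u_h(t_n)$, spatial errors enter your recursion at every time level and must be convolved against the kernel; in the paper the only $h$-contribution is the initial projection error $\|e^0_h\|\leq Ch^p\|\ph\|_p$, and both routes end at the same $h^p\|\ph\|_p$ term (the $r$ in the statement is $p$). Second, you retain the retarded terms $2e^{n-1}_h-e^{n-2}_h$ explicitly and let the Gr\"onwall kernel absorb them, whereas the paper splits $\|u_h-\hat u^n_h\|\leq\|e^n_h\|+\|u^n_h-\hat u^n_h\|$ and bounds the second summand by Lemma \ref{lem:Extrapolation}; strictly speaking that lemma applies to the function $u_h(t)$ satisfying \eqref{eq:Regularity}, not to the iterates $u^n_h$, so your three-way split is precisely what makes this step rigorous. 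Third, and most substantively, the propagation of the consistency error: the paper applies the Cauchy inequality to reach $\delta_t^\alpha\|e^n_h\|^2\leq C\left(n^{-2\alpha}+\|e^n_h\|^2+(\Delta t)^{2\alpha}\right)$ and then invokes the Gr\"onwall lemma to claim $\|e^n_h\|^2\leq C\left(\|e^0_h\|^2+2(\Delta t)^{2\alpha}\right)$; but a squared source $n^{-2\alpha}$ enters Liao's estimate through a term of the type $\max_k t_k^{\alpha/2}k^{-\alpha}=(\Delta t)^{\alpha/2}$ (maximum at $k=1$), so taken literally this route only yields order $\alpha/2$ near $t=0$, i.e. it does not give the claimed bound uniformly in $n$. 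Keeping $\|R^n\|\,\|e^n_h\|$ linear and feeding $\|R^j\|$ into the $\sum_j P^{(k)}_{k-j}\xi^j$ part of the lemma, as you propose, is exactly what recovers the uniform $(\Delta t)^\alpha$ rate, and is how \cite{Liao2018} treat their own consistency terms; your one imprecision is the asserted form $C\Delta t\,t_n^{\alpha-1}$ of the convolved sum --- it is in fact essentially constant, of size $(\Delta t)^\alpha$, in $n$ --- but the resulting maximum is the same. Finally, your explicit estimate of the startup level $n=1$ computed from \eqref{eq:fulldiscrete} addresses a point the paper passes over silently.
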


\begin{proof}
Since $e_h^n \in V_h$ we can plug it into the left-hand side of \eqref{eq:finalscheme} to obtain
\begin{equation}
\begin{aligned}
\left(\delta^\alpha_te_h^n,v_h\right)+a\left(D(t_n);e_h^n,v_h\right)&=\left(\delta^\alpha_tu_h,v_h\right)+a\left(D(t_n);u_h,v_h\right)-\left(\delta^\alpha_tu_h^n,v_h\right)-a\left(D(t_n);u_h^n,v_h\right)\\
&=\left(\delta^\alpha_tu_h,v_h\right)+a\left(D(t_n);u_h,v_h\right)-\left( f(t_n,\hat{u}_h^n),v_h \right)\\
&=\left(\delta^\alpha_tu_h,v_h\right)-\left(\partial^\alpha_tu_h,v_h\right)+\left( f(t_n,u_h),v_h \right)-\left( f(t_n,\hat{u}_h^n),v_h \right)\\
&=\left(\delta^\alpha_tu_h-\partial^\alpha_tu_h,v_h\right)+\left( f(t_n,u_h)-f(t_n,\hat{u}_h^n),v_h \right).
\end{aligned}
\end{equation}
Now choosing $v_h=e_h^n$, utilizing the $(\delta_t^\alpha y^n,y^n)\geq \frac{1}{2}\delta_t^\alpha\|y^n\|^2$ inequality for the $L^2$ functions (see Proposition 2 in \cite{Plociniczak2023}), noticing that $a$ is a positive-definite form, and using the Cauchy-Schwarz inequality, we get
\begin{equation}
    \frac{1}{2}\delta_t^\alpha\|e^n_h\|^2\leq \left( \|\delta_t^\alpha u_h-\partial_t^\alpha u_h\|+\|f(t_n,u_h)-f(t_n,\hat{u}_h^n)\| \right)\|e_h^n\|.
\end{equation}
Since $f$ is a Lipschitz function in the $u$ variable, we further have
\begin{equation}
\frac{1}{2}\delta_t^\alpha\|e^n_h\|^2\leq \|\delta_t^\alpha u_h-\partial_t^\alpha u_h\|\|e_h^n\|+C\|e_h^n\|\|u_h-\hat{u}_h^n\|.
\end{equation}
We can split the error of the temporal approximation combined with the extrapolation method as
\begin{equation}
    \|u_h-\hat{u}_h^n\|=\|u_h-u_h^n+u_h^n-\hat{u}_h^n\|\leq \|u_h-u_h^n\|+\|u_h^n-\hat{u}_h^n\|\leq \|e^n_h\|+C(\Delta t)^\alpha,
\end{equation}
where we used the upper bound $(\Delta t)^\alpha$ for the the extrapolation error from Lemma \ref{lem:Extrapolation}. We now get
\begin{equation}
\frac{1}{2}\delta_t^\alpha\|e^n_h\|^2\leq \|\delta_t^\alpha u_h-\partial_t^\alpha u_h\|\|e_h^n\|+C\left(\|e_h^n\|^2+\|e_h^n\|(\Delta t)^{\alpha}\right).
\end{equation}
Using the Cauchy inequality $ab\leq \frac{1}{2}(a^2+b^2)$ we arrive at
\begin{equation}
    \delta_t^\alpha\|e^n_h\|^2\leq \|\delta_t^\alpha u_h-\partial_t^\alpha u_h\|^2+C\left(\|e_h^n\|^2+(\Delta t)^{2\alpha}\right).
\end{equation}
We need to bound the discretization error of the L1 scheme provided the assumed time regularity \eqref{eq:Regularity}. The truncation error for the L1 scheme is the following (see \cite{Stynes2016}, Lemma 5.2)
\begin{equation}
    \|\delta_t^\alpha u_h-\partial_t^\alpha u_h\|\leq C n^{-\alpha},
\end{equation}
where the constant $C$ can now depend on $u$ and its derivatives. We then have
\begin{equation}
\delta_t^\alpha\|e^n_h\|^2\leq C\left(n^{-2\alpha}+\|e_h^n\|^2+(\Delta t)^{2\alpha}\right).
\end{equation}
Now we can utilize the discrete fractional Gr\"{o}nwall inequality (see \cite{Liao2018}, Lemma 2.2) to conclude that
\begin{equation}
    \|e^n_h\|^2\leq C\left(\|e^0_h\|^2+2(\Delta t)^{2\alpha}\right).
\end{equation}
Due to the known fact concerning orthogonal projection, we can estimate the initial error $\|e^0_h\|=\|P_h\ph-\ph\|\leq Ch^{p}\|\ph\|_p$, $p\in (0,2]$ for $\ph\in H^{p}_0$ (see \cite{thomee2007}). Using that bound along with the elementary inequality $a^2+b^2\leq(a+b)^2$ we finally obtain 
\begin{equation}
    \|e^n_h\|\leq C\left(h^{p}\|\ph\|_p+(\Delta t)^{\alpha}\right),
\end{equation}
and the proof is complete. 
\end{proof}

Combining the above with the error decomposition \eqref{eqn:Splitting} we obtain the final error estimate for the fully discrete scheme.
\begin{cor}\label{cor:Convergence}
Let the assumptions of Theorem \ref{thm:Convergence} be satisfied. Then if $u$ is the solution to the PDE \eqref{eq:maineq} while $u^n_h$ its numerical approximation computed from \eqref{eq:finalscheme}, we have
\begin{equation}
    \|u-u_h^n\|\leq C\left((\Delta t)^{\alpha}+h^{p}\|\ph\|_p+h^2t_n^{-\frac{\alpha(2-p)}{2}}\|\ph\|_p\right), \quad p\in(0,2],
\end{equation}
where $C=C(\alpha,T,\Omega, u)$. 
\end{cor}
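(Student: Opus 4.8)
The plan is to obtain the stated bound by simply assembling the two error estimates already in hand through the triangle-inequality splitting \eqref{eqn:Splitting}. First I would write $\|u-u_h^n\|\leq\|u-u_h\|+\|u_h-u_h^n\|$, which cleanly separates the semidiscrete (spatial) error $\|u-u_h\|$ from the temporal error $\|u_h-u_h^n\|=\|e_h^n\|$ analysed in Theorem \ref{thm:Convergence}.

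For the spatial term I would invoke Theorem 2 of \cite{Plociniczak2022}, which under assumption \eqref{eq:contraints} gives $\|u-u_h\|\leq C h^2 t^{-\alpha(2-p)/2}\|\ph\|_p$ for $p\in(0,2]$; evaluated at the grid node $t=t_n$ this contributes the term $h^2 t_n^{-\alpha(2-p)/2}\|\ph\|_p$. For the temporal term I would use Theorem \ref{thm:Convergence} directly, which under the additional regularity \eqref{eq:Regularity} yields $\|e_h^n\|\leq C(h^p\|\ph\|_p+(\Delta t)^\alpha)$ on the same range of $p$. Summing the two and absorbing every problem-dependent constant into a single $C=C(\alpha,T,\Omega,u)$ produces exactly the three terms $(\Delta t)^\alpha$, $h^p\|\ph\|_p$, and $h^2 t_n^{-\alpha(2-p)/2}\|\ph\|_p$ appearing in the statement.

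There is essentially no genuine obstacle here, since all the analytic work was carried out in Theorem \ref{thm:Convergence} and in the cited semidiscrete estimate; the corollary is a bookkeeping step. The only point requiring mild care is consistency of the regularity index $p\in(0,2]$ across both invoked bounds—one must apply them for the same $\ph\in H^p_0$—and the fact that the semidiscrete estimate is to be read at the current time level $t_n$, so that the singular weight $t_n^{-\alpha(2-p)/2}$ matches the fully discrete solution $u_h^n$. One might also remark that for $p=2$ the two spatial contributions both scale like $h^2$ and the weight degenerates to a constant, recovering the expected smooth-data rate.
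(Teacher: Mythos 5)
Your proposal is correct and follows exactly the paper's (implicit) argument: the paper obtains the corollary by "combining the above with the error decomposition \eqref{eqn:Splitting}," i.e.\ the triangle-inequality splitting plus the semidiscrete spatial bound from Theorem 2 of \cite{Plociniczak2022} evaluated at $t=t_n$ and the temporal bound from Theorem \ref{thm:Convergence}. Your attention to the consistency of $p$ and to reading the semidiscrete estimate at the node $t_n$ is exactly the right bookkeeping.
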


\section{Numerical examples}
For clarity of the presentation, we will discuss numerical experiments in 1 spatial dimension, that is when $\Omega=[0,1]$. As a basis for $V_h$ we choose a space of $M-2$ shifted and scaled tent functions, with elements defined for $i\in\{ 2,\ldots,M-1 \}$ as
\begin{equation}
     \psi_{h,i}(x)=\begin{cases}
        \dfrac{x-x_{i-1}}{h}, \quad x\in [x_{i-1},x_i],\\
        \dfrac{x_{i+1}-x}{h}, \quad x\in [x_{i},x_{i+1}],\\
        0,\quad \text{otherwise}
    \end{cases},
\end{equation}
where $x_i=(i-1)h$, $i\in\{1,\ldots,M\}$, $x_M=1$. We now expand $u_h^n$ in this basis to obtain
\begin{equation}
    u_h^n=\sum\limits_{i=2}^{M-1}y_i^n\psi_{h,i}(x).
\end{equation}
Furthermore, we denote $\bm{y}^n=(y_2^n,\ldots,y_{M-1}^n)$ and plug $v_h=\psi_{h,i}$ into the Galerkin scheme \eqref{eq:finalscheme} to obtain an algebraic system
\begin{equation}
    \bm{S}\delta^\alpha_t\bm{y}^n+\bm{A}^n\bm{y}^n=\bm{f}^n(\bm{\hat{y}}^n).
\end{equation}
The mass matrix $\bm{S}=\{S_{ij}\}_{i,j=2}^{M-1}$, the stiffness matrix $\bm{A}^n=\{A_{ij}^n\}_{i,j=2}^{M-1}$, and the load vector $\bm{f}^n=\{f_{i}^n\}_{i=2}^{M-1}$ are defined by
\begin{equation}
    S_{ij}=(\psi_{h,i},\psi_{h,j}),\quad
    A_{ij}^n=a\left(D(t_n);\psi_{h,i},\psi_{h,j}\right),\quad
    f^n_i=\left(f\left(t_n,\sum\limits_{i=2}^{M-1}\hat{y_i^n}\psi_{h,i}(x)\right),\psi_{h,i}\right).
\end{equation}
Applying the L1 scheme to the Caputo derivative \eqref{eq:caputoscheme} leads us to
\begin{equation}
\left(\bm{S}\frac{h^{-\alpha}}{\Gamma{(2-\alpha)}} +\bm{A}^n \right) \bm{y}^n=b_{n}\bm{S}\bm{y}^0+\bm{S}\sum_{i=1}^{n-1}\left(b_{n-i}(1-\alpha)-b_{n-i+1}(1-\alpha)\right)\bm{y}^i+ \bm{f}^n(\bm{\hat{y}}^n),
\end{equation}
or
\begin{equation}
\bm{y}^n=\left(\bm{S}\frac{h^{-\alpha}}{\Gamma{(2-\alpha)}} +\bm{A}^n \right)^{-1} \left(b_{n}\bm{S}\bm{y}^0+\bm{S}\sum_{i=1}^{n-1}\left(b_{n-i}(1-\alpha)-b_{n-i+1}(1-\alpha)\right)\bm{y}^i+ \bm{f}^n(\bm{\hat{y}}^n)\right).
\end{equation}
Below, for all numerical experiments, we will consider $\alpha\in\{\frac{1}{3},\frac{1}{2},\frac{2}{3}\}$ and the diffusivity defined as
\begin{equation}
    D(x,t)=\frac{1}{10}\left(1+t^{\frac{1}{2}}\left( x(1-x) \right)^{\frac{2}{3}}\right).
\end{equation}
It is easy to notice that it satisfies our assumptions \eqref{eq:contraints}, as $D_t(x,\cdot)$ lies in $L^1((0,T))$ and $D_x(\cdot,t)$ lies in $L^2((0,1))$. We will consider two examples:
\begin{enumerate}
    \item an engineered solution with smooth initial condition and $u(x,t)=(1+t^\alpha)\ph(x)$,
    \item a realistic problem, being the  fractional Zeldovich–Frank-Kamenetskii (ZFK) equation with a non-smooth initial condition.
\end{enumerate}
The error will be global, taking the maximal error for all $t\in[0,1]$.

\paragraph{Engineered solution}\phantom{a}\\
Let $u(x,0)=\ph(x)=\sin{(\pi x)}$ and choose the source function in a way that $u(x,t)=\left(  1+t^\alpha\right)\ph(x)$ is the solution to \eqref{eq:maineq}, that is
\begin{equation}
    f(x,t,u)=\left(\frac{\Gamma{(\alpha+1)}}{(1+t^{\alpha})}+\pi^2D(x,t)\right)u-\pi(1+t^\alpha)\cos{(\pi x)}\frac{\partial}{\partial x} D(x,t).
\end{equation}
According to Corollary \ref{cor:Convergence} the spatial order of convergence is $2$, as the initial condition lies in $H^2_0$, while the temporal one in the worst case is equal to $\alpha$. To choose an appropriate number of spatial degrees of freedom, that is $M$, we have to ascertain that the error in time dominates the error in space, that is,
\begin{equation}
    N^{-\alpha}\gg M^{-2}\implies M\gg N^{\frac{\alpha}{2}}.
\end{equation}
For example, choosing $N=2^{10}$ and $\alpha=2/3$ we get $M\gg 2^{\frac{10}{3}}>2^4$. The choice of $M=2^6$ thus seems to be well suited for the problem considered. We can see the results for the global error in time on the log-log scale in Fig. \ref{fig:nonsmooth_temporal}. The error behaves as predicted by the theory, even for small values of $N$, that is, the order of convergence agrees with $\alpha$. Furthermore, the local spatial error computed at $t=1$ is presented in a log-log scale in Fig. \ref{fig:nonsmooth_spatial}. As can be seen, the span of the $M$-parameter is relatively small due to fast convergence. As can be seen, numerical computations regarding the engineered solution verify the assertion of Corollary \ref{cor:Convergence}. 

\begin{figure}
    \centering
    \includegraphics[scale=0.35]{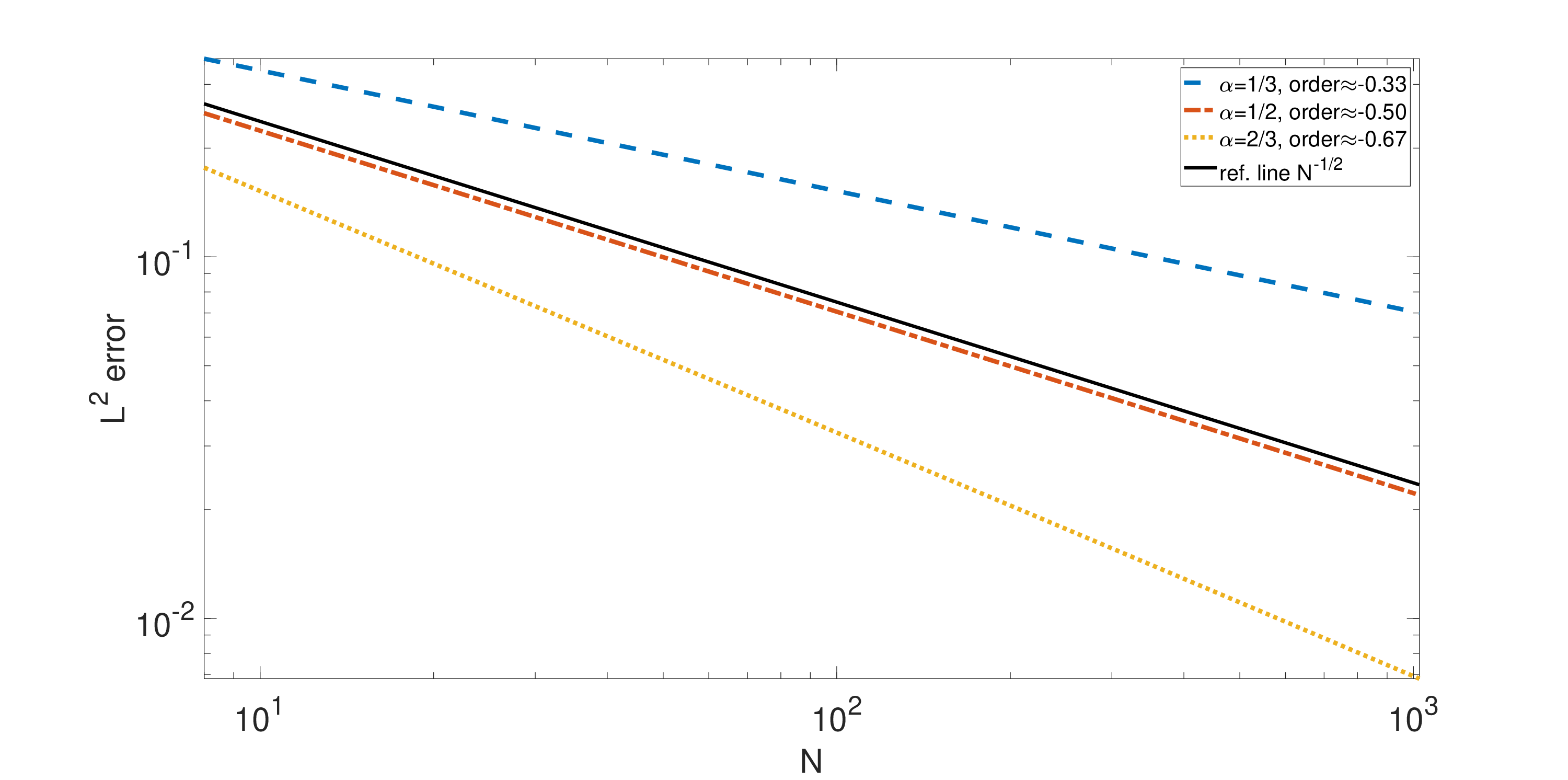}
    \caption{A log-log plot of the global in time $L^2$ error as a function of the number of temporal gridpoints $N$ with the number of spatial degrees of freedom $M=2^6$. }
    \label{fig:nonsmooth_temporal}
\end{figure}

\begin{figure}
    \centering
    \includegraphics[scale=0.35]{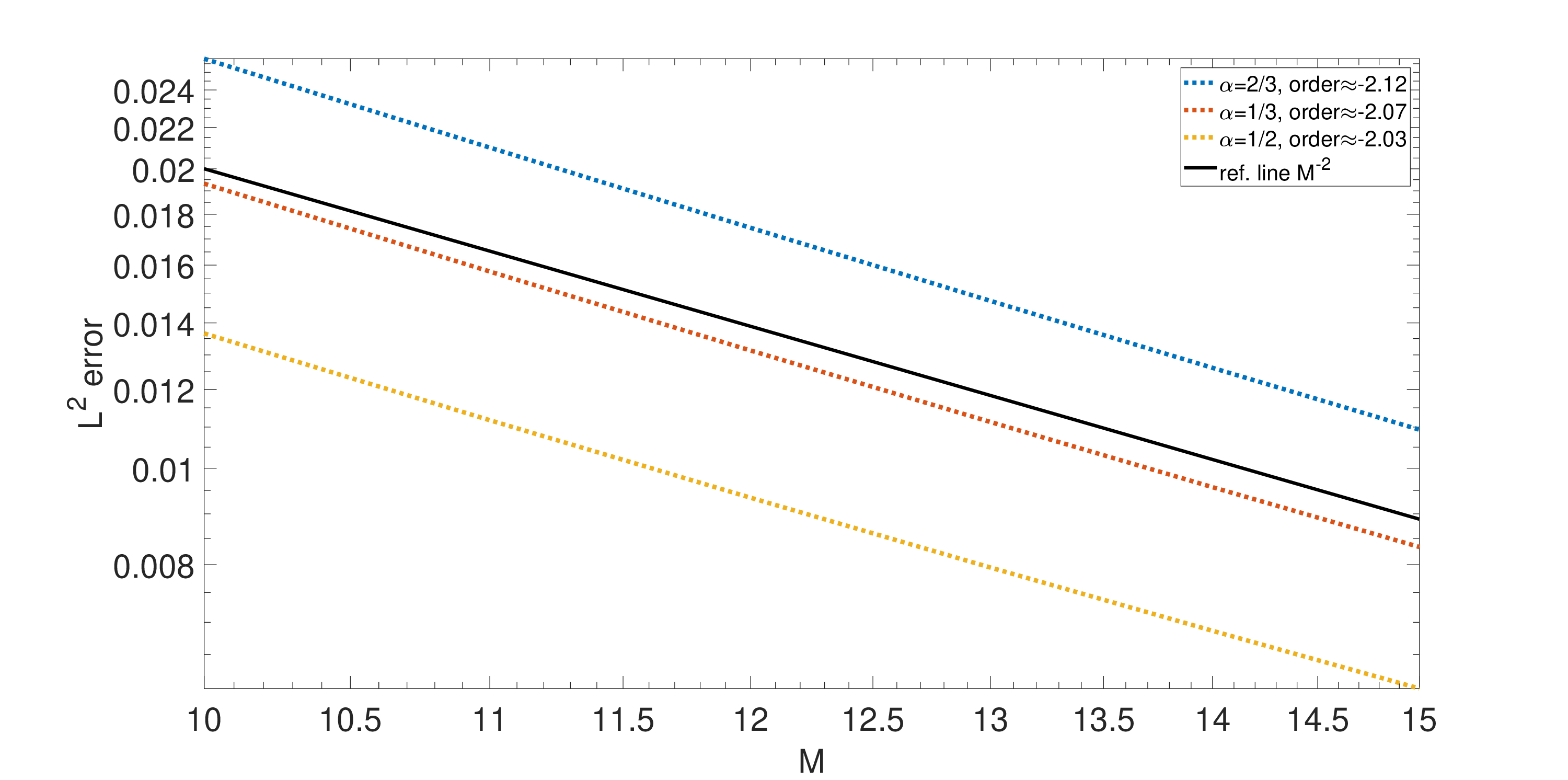}
    \caption{A log-log plot of the local in time $L^2$ error as a function of the number of spatial degrees of freedom $M$ with the number of temporal gridpoints $N=10^3$. }
    \label{fig:nonsmooth_spatial}
\end{figure}

\begin{figure}
    \centering
    \includegraphics[scale=0.35]{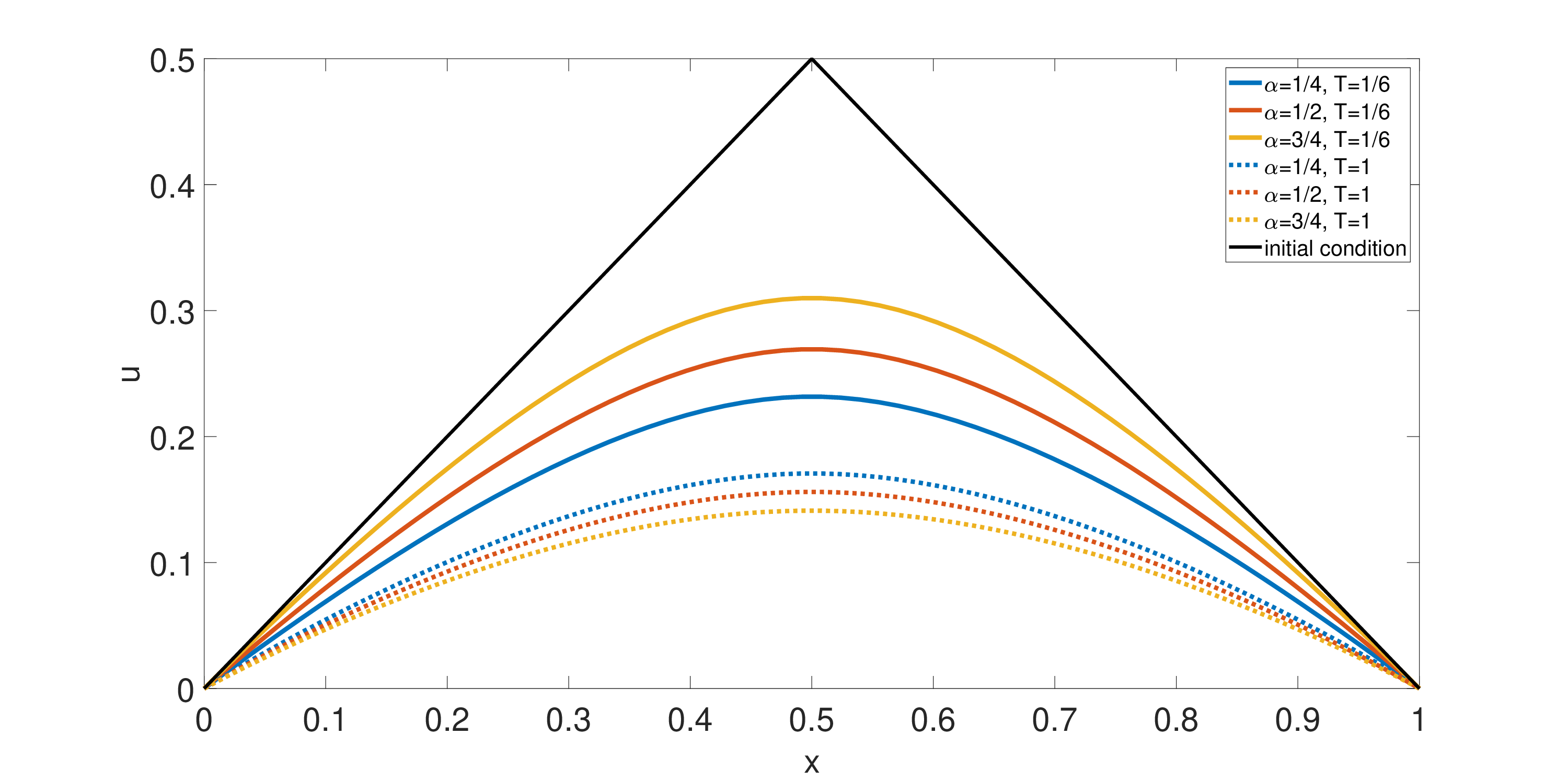}
    \caption{Exemplary solutions of the time-fractional ZFK equation with variable diffusivity. The solutions are obtained separately, with $N=2^{12}$ and $M=2^6$.}
    \label{fig:ZFK}
\end{figure}

\paragraph{ZFK equation}\phantom{a}\\
The second example is the subdiffusive Zeldovich–Frank-Kamenetskii (ZFK) equation with the time-dependent Zeldovich number $\beta$ and variable diffusivity, given by
\begin{equation}\label{eq:ZFK}
    \partial^\alpha_tu(x,t)=\frac{\partial}{\partial x}\left( D(x,t)\frac{\partial}{\partial x} u\right)+\frac{\beta^2(t)}{2}u(1-u)e^{\beta(t)(1-u)}.
\end{equation}
The classical equation models premixed flame propagation, where $u$ usually denotes the temperature in dimensionless units. The introduction of the nonlocality in time and variable diffusivity can be thought of as a generalization based on the properties of the medium in which the flame propagates. The time dependence of the Zeldovich number can model the change in temperature of the unburnt mixture, as
\begin{equation}
 \beta=\frac{E_a}{RT_b}\frac{T_b-T_u}{T_b},   
\end{equation}
where $E_a$ is the activation energy of the reaction, $R$ is the universal gas constant, $T_b$ is the burnt gas temperature and $T_u$ is the unburnt mixture temperature. For typical combustion, its value usually lies between $8$ and $20$, we choose $\beta$ to be
\begin{equation}
    \beta(t)=14+6\sin{(5\pi t)}.
\end{equation}
As the initial condition, we shall consider a function in $H_0^1$, but not in $H_0^2$, namely,
\begin{equation}
    \ph(x)=\frac{1}{2}-\abs{x-\frac{1}{2}}.
\end{equation}
Estimated solutions are shown in Fig. \ref{fig:ZFK}. We can see the expected behavior for anomalous diffusion, which is a faster mass movement for smaller times and small $\alpha$, while a faster mass movement at larger times for greater $\alpha$. The unboundedness of the time derivative close to $t=0$ can also be observed, as the solution changes very quickly compared to the later behavior. In Fig. \ref{fig:ZFK_derivative} we can see further examination of this observation. For example, the time derivative at a point $x=1/2$ blows up, behaving as a power function for $\alpha=3/4$, while for $\alpha=1/2,1/4$ the shape of the plot on a log-log scale suggests at least a power-like divergence. For estimating the spatial order of convergence, we will use Aitken's formula based on extrapolation
\begin{equation}
    order^x\approx \log_2{\frac{\| u_{\frac{h}{2}}^N-u_h^N\|}{\| u_{\frac{h}{4}}^N-u_{\frac{h}{2}}^N\|}}.
\end{equation}
The order of convergence in time will be estimated analogously, with varying $N$, $\forall_{k}\,kN=T$, and constant $T=1$
\begin{equation}
    order^t\approx \log_2{\frac{\| u_h^{2N}-u_h^N\|}{\| u_h^{4N}-u_h^{2N}\|}}.
\end{equation}
We also consider a global temporal error order given by
\begin{equation}
    order_\infty^t\approx \log_2{\frac{\max\limits_{n\in \{1,\ldots,N\}}\| u_h^n-u_h^{2n}\|}{\max\limits_{n\in \{1,\ldots,N\}}\| u_h^{4n}-u_h^{2n}\|}},
\end{equation}
where the solutions $u_h^{kn}$ are defined for a temporal grid with $kN$ points. We can see the results for the temporal convergence in Tab. \ref{tbl:ZFK_temporal}. The spatial convergence predicted by the theory is of order $1$, while the global temporal one of order $\alpha$ - the choice of $M=2^9$, $N=2^6$ is thus well suited, as the local error is of order $1$. The results show that the global order of convergence is close to the expected value of $\alpha$ and increases with increasing $\alpha$. The estimated order is somewhat smaller than anticipated - this could be the result of error of the method of extrapolation, as well as additional slow convergence to the spatial error.
\begin{figure}
    \centering
    \includegraphics[scale=0.35]{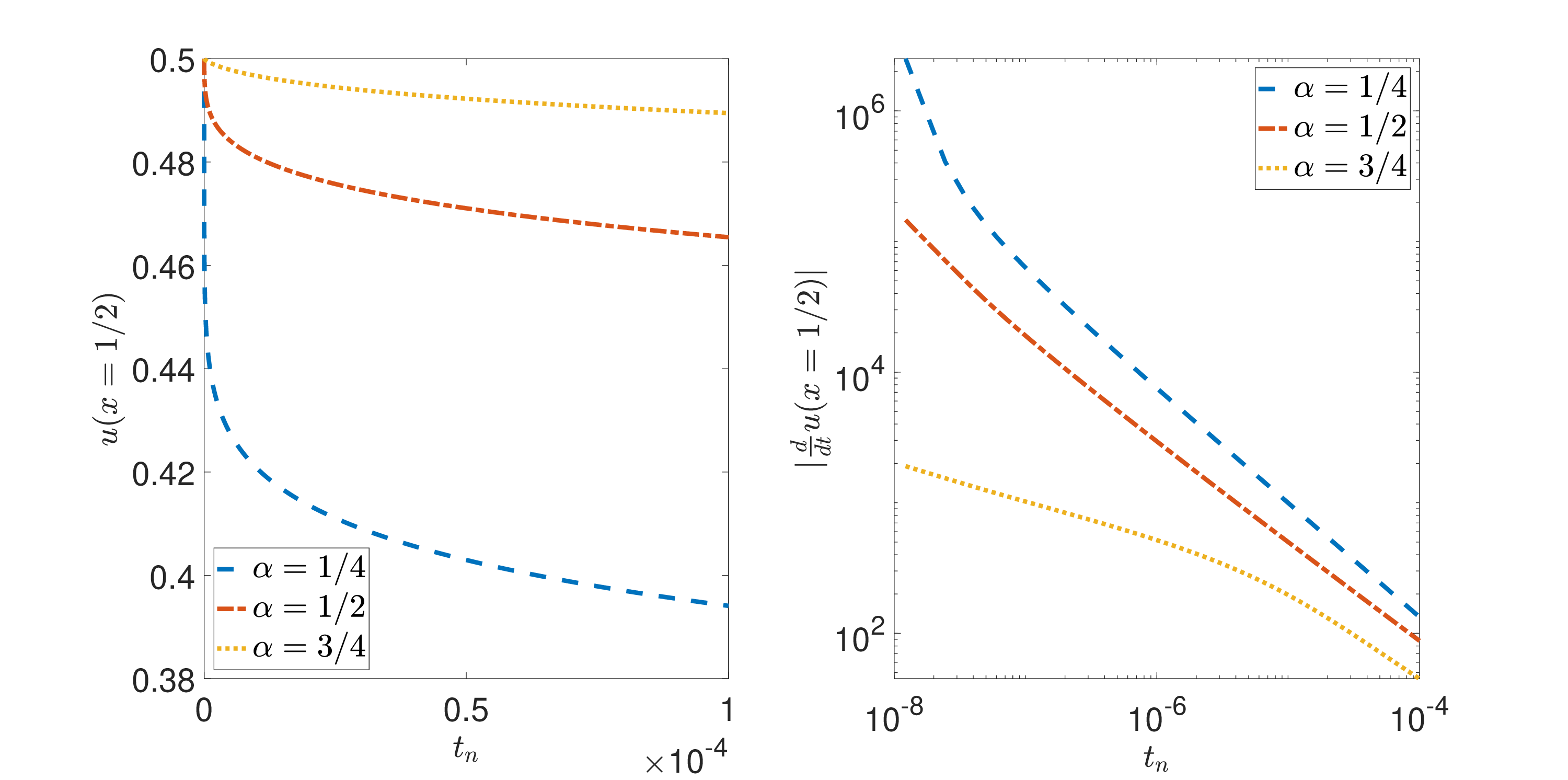}
    \caption{On the left: solutions of \eqref{eq:ZFK} at a point $x=1/2$, with $T=10^{-4}$, $N=2^{13}$, $M=2^6+1$. On the right: timer derivatives on a log-log scale.}
    \label{fig:ZFK_derivative}
\end{figure}

\begin{table} 
    \centering
    \begin{tabular}{cccc}
        \toprule
        norm\textbackslash $\alpha$ & $1/3$ & $1/2$ & $2/3$\\
        \midrule
        $order^t$ & 1.03 & 1.04 & 1.05 \\
        $order_\infty^t$ & 0.23 & 0.38 & 0.5 \\
        \bottomrule
    \end{tabular}
    \caption{Results for the order of temporal convergence, for $M=2^9$ and $N=2^6$. }
    \label{tbl:ZFK_temporal}
\end{table}

In Tab. \ref{tbl:ZFK_spatial} we can see the result for spatial convergence. As predicted, orders of convergence agree with the value $1$. The choice of $N=2^{11}$ makes the temporal error negligible compared to the spatial error for $M=2^6$. The results show that the order is indeed close to $1$ and independent of $\alpha$, which was expected due to the initial condition lying in $H^1_0$, but not $H^2_0$.
\begin{table} 
    \centering
    \begin{tabular}{cccc}
        \toprule
        $\alpha$ & $1/3$ & $1/2$ & $2/3$\\
        \midrule
         $order^x$ & 1.05 & 1.05 & 1.05 \\
        \bottomrule
    \end{tabular}
    \caption{Results for the order of spatial convergence, for $N=2^{11}$ and $M=2^4$.}
    \label{tbl:ZFK_spatial}
\end{table}

\section*{Acknowledgements}
This research was supported by \textit{Narodowe Centrum Nauki} (NCN) Sonata Bis grant with number 2020/38/E/ST1/00153.

\printbibliography
\end{document}